\tikzstyle{blackNode}=[fill=black, draw=black, shape=circle]
\numberwithin{equation}{section}
\newtheorem{lemma}{Lemma}
\newtheorem*{question*}{Question}
\newtheorem{thmx}{Theorem}
\newtheorem*{meta-question*}{Meta-question}
\theoremstyle{definition}
\newtheorem{definition}{Definition}
\newtheorem*{examples*}{Examples}
\newtheoremstyle{TheoremNum}
{\topsep}{\topsep} 
{\itshape} 
{-0.25cm} 
{\bfseries} 
{.} 
{ }  
{\thmname{#1}\thmnote{ \bfseries #3}}
\theoremstyle{TheoremNum}
\DeclareMathOperator{\Dic}{\mathrm{Dic}}
\DeclareMathOperator{\SD}{\mathrm{SD}}
\newcommand{\GL}{\mathrm{GL}}
\newcommand{\SL}{\mathrm{SL}}
\title[Solvable and non-solvable groups of the same order type]{Solvable and non-solvable finite groups\\ of the same order type}
\author{Pawe\l\ Piwek}
\date{February 2024}
\address{Mathematical Institute, Andrew Wiles Building, Observatory Quarter, University of Oxford, Oxford OX2 6GG, UK}
\email{pawel.piwek@maths.ox.ac.uk}
\begin{document}
	
\newpage
	
\maketitle

\begin{abstract}
	We construct two finite groups
	of size $2^{365}\cdot 3^{105}\cdot 7^{104}$:
	a solvable group $G$ and a non-solvable group $H$,
	such that for every integer $n$
	the groups have the same number of elements of order $n$.
	This answers a question posed in 1987 by John G. Thompson.
\end{abstract}
	
\section{Introduction}\label{sec:intro}

The following meta-question has been of considerable interest
in the field of finite group theory.

\begin{meta-question*}
	Which properties of a group $G$ can we discern from
	knowing only the orders of its elements?
\end{meta-question*}

While some addressed this question by looking at
just the \emph{set} of orders of elements of $G$ --
see \cite{vasil2009characterization},
we are interested in also using
the \emph{multiplicity} of these orders,
which prompts the following definition.

\begin{definition}
	For a group $G$ we define its \emph{order type} to be the function
	\begin{equation*}
		o_G: n \mapsto |\{g \in G\ |\ g\text{ has order }n\}|.
	\end{equation*}
\end{definition}

It isn't difficult to give examples
of an abelian group $G$ and a non-abelian group $H$
such that $o_G = o_H$.
Indeed, take $G = C_3 \times C_3 \times C_3$
and $H$ to be the Heisenberg group
$(C_3 \times C_3) \rtimes C_3$,
which is non-abelian of exponent $3$.

On the other hand, the order type allows us to count
the number of elements of order $p^\alpha$ for $p$ a prime,
so it can also tell us if a $p$-Sylow subgroup is normal.
This implies that the order type can distinguish nilpotent groups
from non-nilpotent ones.
A similar argument can be given for super-solvable groups.

A problem posed in 1987 by John G. Thompson
in a private communication with Wujie Shi
(see \cite{shen2023thompson, shi2024quantitative};
also Question~2.13 in \cite{guralnick1993same},
and Problem~12.37
in \cite{khukhro2014unsolved})
is: which is the case for solvability?

\begin{question*}
	Let $G$ be a finite solvable group
	and $H$ be any finite group
	such that $o_G = o_H$.
	Is $H$ necessarily solvable?
\end{question*}

It was shown that the answer is positive
if we assume that the {prime graph} of $G$ is disconnected
-- see \cite{shen2023thompson},
or in the case where there are at most three different orders of elements
-- see \cite{shen2012groups}.
Some works also showed results about solvability of groups
(and super-solvability, nilpotence and cyclicity too)
based on the average element order
-- see \cite{herzog2022another,lazorec2023average}.

In this paper, we give a negative answer to Thompson's question.
No claim is made
that the presented examples
are of minimal size.

\begin{thmx}\label{mainthm}
	Let $G_i$ and $H_i$ be the collections of finite groups
	described in \cref{tab:Gis,tab:His},
	and let $m_i$ and $n_i$ be the associated natural numbers from the tables.
	Let $G$ and $H$ be the direct products
	\begin{equation*}
		G = \prod G_i^{m_i}, \quad
		H = \prod H_i^{n_i}.
	\end{equation*}
	Then $o_G = o_H$, $G$ is solvable, and $H$ is not solvable.
\end{thmx}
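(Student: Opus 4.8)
The plan is to replace the order-type identity $o_G=o_H$ by a multiplicative invariant that interacts well with direct products. For a finite group $K$ and $n\in\N$, write $\hat{o}_K(n):=|\{g\in K : g^n=1\}|$ for the number of solutions of $x^n=1$ in $K$. Since an element satisfies $g^n=1$ exactly when its order divides $n$, we have $\hat{o}_K(n)=\sum_{d\mid n}o_K(d)$, and M\"obius inversion over the divisibility poset shows that $o_G=o_H$ holds if and only if $\hat{o}_G=\hat{o}_H$. I would therefore work throughout with $\hat{o}$ in place of $o$.

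The decisive feature of $\hat{o}$ is its behaviour under direct products. Because $(g,h)^n=1$ precisely when $g^n=1$ and $h^n=1$, one has $\hat{o}_{A\times B}(n)=\hat{o}_A(n)\,\hat{o}_B(n)$, so $\hat{o}$ carries direct products to pointwise products. Consequently
\begin{equation*}
\hat{o}_G(n)=\prod_i \hat{o}_{G_i}(n)^{m_i},\qquad
\hat{o}_H(n)=\prod_i \hat{o}_{H_i}(n)^{n_i},
\end{equation*}
and the required identity $\hat o_G=\hat o_H$ becomes the family of scalar equations $\prod_i \hat{o}_{G_i}(n)^{m_i}=\prod_i \hat{o}_{H_i}(n)^{n_i}$, one for each $n$.

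I would then cut this infinite family down to a finite check. Since $g^n=1$ depends only on whether $\mathrm{ord}(g)\mid n$, and every element order divides $\exp(K)$, we have $\hat{o}_K(n)=\hat{o}_K(\gcd(n,\exp K))$; thus $\hat{o}_K$ is determined by its values at divisors of $\exp(K)$. As $|G|=|H|=2^{365}\cdot 3^{105}\cdot 7^{104}$, the common exponent is a $\{2,3,7\}$-number, so it suffices to verify the displayed equation for each $n=2^i3^j7^k$ dividing $\exp(G)=\exp(H)$ — a finite list. For each such $n$ the factors $\hat{o}_{G_i}(n)$ and $\hat{o}_{H_i}(n)$ are the explicitly tabulated solution counts from \cref{tab:Gis,tab:His}, and since these are all $\{2,3,7\}$-numbers, taking $p$-adic valuations for $p\in\{2,3,7\}$ converts each multiplicative equation into three linear identities in the integers $m_i,n_i$, which I would confirm hold with the stated multiplicities. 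Solvability is then immediate: $G$ is a direct product of solvable factors and hence solvable, while some $H_i$ occurring with $n_i\geq 1$ is non-solvable, so $H$ is not solvable.

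The genuinely hard part is not any single reduction above but the simultaneous satisfaction of the whole finite system of valuation equations. The groups $G_i,H_i$ and the multiplicities $m_i,n_i$ must be engineered so that the solution-count data of the non-solvable building block is matched — prime by prime and exponent by exponent, across every divisor $n$ — by an integer combination of solution-count data coming only from solvable groups. Verifying that the tabulated values indeed solve this system, equivalently that the linear relations among the $\log_p\hat{o}$-values encoded in the tables hold for all relevant $n$, is where the real content of \cref{mainthm} lies.
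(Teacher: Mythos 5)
Your proposal follows essentially the same route as the paper: your $\hat{o}_K$ is exactly the paper's exponent type $e_K$, and the three reductions (M\"obius equivalence with the order type, multiplicativity over direct products, restriction to divisors of the exponent) are the paper's two lemmas plus the observation $e_G(n)=e_G(\gcd(n,E_G))$, after which the paper likewise verifies the finite system by prime factorisation of the products. One small correction: the individual solution counts $e_{G_i}(n)$, $e_{H_i}(n)$ are \emph{not} all $\{2,3,7\}$-numbers (e.g.\ values such as $15$, $44$ and $50$ occur in \cref{tab:expontent_Gis,tab:expontent_His}, and the verified products in \cref{tab:Gexps,tab:Hexps} involve the primes $5$, $11$, $13$, $17$ and $19$), so the valuation check must be carried out at every prime dividing some table entry, not just at $2$, $3$ and $7$; with that adjustment your finite verification scheme is exactly what the paper does.
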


\begin{table}[h]
	\centering
	\small{
		\begin{subtable}{.45\linewidth}
			\centering
			\begin{tabular}{llll}
				$i$ & $G_i$ & Id &  $m_i$ \\
				\hline
				1 & $C_4$ 		&       (4, 1) &             9 \\
				2 & $D_3$ 		&       (6, 1) &             6 \\
				3 & $C_7$ 		&       (7, 1) &             1 \\
				4 & $D_4$ 		&       (8, 3) &             9 \\
				5 & $D_7$ 		&      (14, 1) &            18 \\
				6 & $\SL(2,3)$	&      (24, 3) &            21 \\
				7 & $C_{24}\rtimes C_2$	&      (48, 6) &             3 \\
				8 & $C_7 \rtimes D_4$	&      (56, 7) &             3 \\
				9 & $C_7\rtimes C_{12}$	&      (84, 1) &             6 \\
				10 & $\Dic_{21}$		&      (84, 5) &             6 \\
				11 & $C_7 \rtimes A_4$	&     (84, 11) &            21 \\
				12 & $C_7\rtimes D_7$	&      (98, 4) &             2 \\
				13 & $C_4 \rtimes F_7$	&     (168, 9) &            21 \\
				14 & $C_{21}\rtimes D_4$&    (168, 15) &             9 \\
				15 & $C_7\rtimes D_{12}$&    (168, 17) &             6 \\
				16 & $F_8\rtimes C_3$	&    (168, 43) &             3 \\
				17 & $D_8\rtimes D_7$	&   (224, 106) &             3 \\
				18 & $C_7\rtimes D_{24}$&    (336, 31) &             3 \\
			\end{tabular}
			\caption{Groups $G_i$ and numbers $m_i$.}
			\label{tab:Gis}
		\end{subtable}
		\hfill
		\begin{subtable}{.45\linewidth}
			\centering
			\begin{tabular}{llll}
				$i$ & $H_i$ & Id &  $n_i$ \\\hline
				1 & $C_2$		&       (2, 1) &            21 \\
				2 & $C_3$		&       (3, 1) &             3 \\
				3 & $\Dic_3$	&      (12, 1) &             6 \\
				4 & $A_4$		&      (12, 3) &            21 \\
				5 & $\SD_{16}$	&      (16, 8) &             3 \\
				6 & $C_7\rtimes C_3$	&      (21, 1) &             4 \\
				7 & $D_{12}$	&      (24, 6) &             6 \\
				8 & $C_3\rtimes D_4$	&      (24, 8) &             6 \\
				9 & $\Dic_7$	&      (28, 1) &            15 \\
				10 & $F_7$		&      (42, 1) &            18 \\
				11 & $D_{21}$	&      (42, 5) &             6 \\
				12 & $D_{24}$	&      (48, 7) &             3 \\
				13 & $D_{28}$	&      (56, 5) &            27 \\
				14 & $\Dic_7\rtimes C_6$	&    (168, 11) &             3 \\
				15 & $C_{14}.A_4$		&    (168, 23) &            21 \\
				16 & $\GL(3, 2)$		&    (168, 42) &             3 \\
				17 & $C_7\rtimes F_7$	&    (294, 10) &             2 \\
				18 & $D_{12}.D_7$		&    (336, 36) &             3 \\
			\end{tabular}
			\caption{Groups $H_i$ and numbers $n_i$.}
			\label{tab:His}
		\end{subtable}
		\label{tab:GisHis}
		\caption{The groups $G_i$ and $H_i$ involved in \cref{mainthm}
			and their multiplicities $m_i$ and $n_i$.
			The column labelled `Id' contains
			the \texttt{Small Groups} isomorphism type identifier
			(see \cite{besche2022smallgrp}),
			while columns $G_i$ and $H_i$
			give some idea of the group structure,
			but they don't necessarily identify the groups uniquely
			in the case of extensions.}
	}
\end{table}

\subsection*{Acknowledgements}

The author is grateful to his friend and colleague Adam Klukowski
for a fruitful conversation about the problem,
and to his PhD supervisor Martin Bridson for the general guidance.

\section{Proof of \cref{mainthm}}\label{sec:proof}

Before presenting the proof, let's first rephrase the question slightly.

\begin{definition}
	For a group $G$ we define its \emph{exponent type} to be the function
	\begin{equation*}
		e_G: n \mapsto |\{g \in G\ |\ g^n = 1\}|.
	\end{equation*}
\end{definition}

\begin{lemma}
	For groups $G$ and $H$ we have: $o_G = o_H \iff e_G = e_H$.
\end{lemma}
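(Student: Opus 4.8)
The plan is to exploit the elementary fact that an element $g$ satisfies $g^n = 1$ precisely when the order of $g$ divides $n$. Grouping the elements counted by $e_G(n)$ according to their exact order therefore yields the divisor-sum identity
\begin{equation*}
	e_G(n) = \sum_{d \mid n} o_G(d),
\end{equation*}
valid for every positive integer $n$ and every (finite) group $G$. Once this identity is in hand, the whole lemma reduces to the observation that the functions $o_G$ and $e_G$ determine one another, so that equality of one pair is equivalent to equality of the other.

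For the forward implication the identity can be used directly: if $o_G = o_H$, then substituting into the displayed formula gives $e_G(n) = \sum_{d \mid n} o_G(d) = \sum_{d \mid n} o_H(d) = e_H(n)$ for every $n$, whence $e_G = e_H$.

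For the converse I would recover $o_G$ from $e_G$ by induction on $n$. Isolating the top term $d = n$ of the divisor sum gives
\begin{equation*}
	o_G(n) = e_G(n) - \sum_{\substack{d \mid n \\ d < n}} o_G(d),
\end{equation*}
which expresses $o_G(n)$ using only $e_G(n)$ together with the values of $o_G$ at proper divisors of $n$. Assuming $e_G = e_H$, an easy induction (with base case $o_G(1) = e_G(1) = 1$) then forces $o_G(n) = o_H(n)$ for all $n$, i.e. $o_G = o_H$. Equivalently, one could simply invoke Möbius inversion to write $o_G(n) = \sum_{d \mid n} \mu(n/d)\, e_G(d)$, making explicit that $o_G$ is a fixed universal function of $e_G$.

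There is no genuine obstacle here, since both functions are manifestly determined by the other through a single arithmetic relation; the only point deserving a word of care is that the sums are finite, which is guaranteed by the finiteness of $G$ (only finitely many $d$ occur as orders of elements), and this is exactly the setting relevant to \cref{mainthm}.
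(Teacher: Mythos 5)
Your proposal is correct and follows essentially the same route as the paper: both start from the observation that $g^n = 1$ exactly when the order of $g$ divides $n$, deduce $e_G(n) = \sum_{d\mid n} o_G(d)$, and recover $o_G$ from $e_G$ by M\"obius inversion (your induction on divisors is just an unwinding of the same inversion). No gaps.
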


\begin{proof}
	$g^n = 1$ if and only if $\text{ord}(g)$ divides $n$.
	Thus $e_G(n) = \sum_{d|n} o_G(d)$,
	and $o_G(n) = \sum_{d|n} e_G(n/d) \cdot \mu(d)$,
	where $\mu$ is the M\"{o}bius function.
\end{proof}

\begin{lemma}\label{product}
	Let $G$ and $H$ be any groups. Then $e_{G \times H} = e_G \cdot e_H$.
\end{lemma}

\begin{proof}
	For $(g, h) \in G\times H$ we have $(g, h)^n = (g^n, h^n)$,
	so $(g, h)^n$ is trivial if and only if $g^n$ and $h^n$ are trivial.
\end{proof}

\begin{proof}[Proof of \cref{mainthm}]
	With the help of \texttt{MAGMA} computational package \cite{MR1484478}
	we can check that---with the notable exception of $H_{16}$---%
	all of the groups $G_i$ and $H_i$ are solvable.
	This means that $G$ is solvable as a product of solvable groups,
	and $H$ isn't as it contains a non-solvable subgroup,
	namely $H_{16} \cong \GL(3, 2)$.
	
	To prove $e_G = e_H$ we need to compute
	the exponent types $e_{G_i}$ and $e_{H_i}$,
	and check that $\prod e_{G_i}^{m_i} = \prod e_{H_i}^{n_i}$.
	
	The exponent types of groups $G_i$ and $H_i$
	were computed with the help of \texttt{MAGMA} too
	(see the ancillary file \texttt{exponents\_computation.m})
	and are listed in \cref{tab:expontent_Gis,tab:expontent_His}.
	Only the columns labelled with divisors of $168$ were included
	as $e_G(n) = e_G\left(\gcd(n, E_G)\right)$
	for $E_G$ being the exponent of the group $G$.
	Since the products $\prod e_{G_i}(n)^{m_i}$
	and $\prod e_{H_i}(n)^{n_i}$
	become unmanageably large,
	we factorise them into their prime factors
	to compute the exponent types of $G$ and $H$
	listed in \cref{tab:Gexps,tab:Hexps}.
	This computation is done in the ancillary file
	\texttt{exponents\_verification.ipynb}.
\end{proof}

\section{How these groups were found}

Finding groups $G$ and $H$ presented in \cref{mainthm} manually
would be very difficult.
Instead, we employed a computer-based search method outlined below.

\textbf{Step 1.}
	We used \texttt{MAGMA}
	to access the \texttt{Small Groups} database
	and to we go through all groups of size at most 2000
	excluding groups of size divisible by 128,
	and to compute for each group
	\begin{enumerate}
		\item whether it is solvable---%
			using \texttt{IsSolvable} function,
		\item whether it is a non-trivial direct product---%
			using a custom function \texttt{IsDirectProduct}
			(see the ancillary file \texttt{functions.m}),
			which searches for two normal subgroups with trivial intersection
			and whose sizes multiply to the size of the investigated group, 
		\item its order type---using a simple custom function \texttt{OrderType}.
	\end{enumerate}

\textbf{Step 2.}
	We parsed the outputs into a \texttt{.csv} table
	and handled the remaining tasks using various \texttt{Python} libraries.
	Given our objective of constructing an example
	through the direct product of groups,
	we focused solely on those groups that were not direct products themselves.
	
\textbf{Step 3.}
	We rephrased the question as follows.
	\begin{enumerate}
		\item First we converted the order types $o_G$ to exponent types $e_G$
			as described in \cref{sec:proof}.
		\item Then we used multiplicative M\"{o}bius inversion on $e_G$
			defining the \emph{revolved exponent type}
			$r_G: \mathbb{N} \to \mathbb{Q}$ as
			\begin{equation*}
				r_G(n) := \prod_{d|n} e_G(n/d)^{\mu(d)}.
			\end{equation*}
			The advantage of doing so was that
			$r_G(n) = 1$
			if $n\nmid E_G$.
			Indeed, for $k\mid E_G$ and $\gcd(m, E_G)=1$
			we get the following.
			\begin{align*}
				r_G(mk) & = \prod_{d|mk} e_G(mk/d)^{\mu(d)}
				= \prod_{e|m}\prod_{f|k} e_G(m/e \cdot k/f)^{\mu(ef)} \\
				& = \prod_{e|m}\prod_{f|k} e_G(k/f)^{\mu(e)\mu(f)}
				= \prod_{e|m}\Big(\prod_{f|k} e_G(k/f)^{\mu(f)}\Big)^{\mu(e)} \\
				& = r_G(k)^{\sum_{e|m} \mu(e)} =
				\begin{cases}
					r_G(k) &\text{ if } m = 1, \\
					1 &\text{ otherwise}.
				\end{cases}
			\end{align*}
			Additionally, we maintained the multiplicativity
			with respect to direct products:
			$r_{G_1\times G_2} = r_{G_1}\cdot r_{G_2}$.
			In order to operate with rational numbers
			we used a Python datatype \texttt{Fraction}.
		\item Finally, we factorised the revolved exponent types into their prime factors by defining
			$v_G(n, p)$ to be the exponent of a prime $p$
			in the factorisation of $r_G(n)$ into prime factors.
			Thus, for $G = \prod G_i^{m_i}$ we have
			\begin{equation*}
				v_G(n, p) = \sum m_i \cdot v_{G_i}(n, p).
			\end{equation*}
			Additionally, for a fixed $G$,
			the values of $v_G(n, p)$ are zero for all but finitely many combinations $(n, p)$. 
	\end{enumerate}
	
	At this point the question became:
	is there a positive rational linear combination
	$\sum k_i\cdot v_{N_i}$
	for non-solvable groups $N_i$
	in the rational span of $v_{S_i}$
	for $S_i$ solvable groups?

\textbf{Step 4.}
	An even simpler question,
	although not equivalent, arises:
	is any $v_N$ for a non-solvable group $N$
	in the span of $v_{S_i}$ for solvable groups $S_i$?
	This question is equivalent to
	determining whether the equation $Vx=v_N$ has a solution,
	where $V$ is a matrix of $v_{S_i}(n, p)$
	with rows indexed by pairs $(n, p)$
	and columns indexed by solvable groups $S_i$.
	A significant challenge lies in that the matrix $V$
	were in our case of size $9945 \times 100972$,
	albeit very sparse.

\textbf{Step 5.} We first attempted solving this numerically
	using Least Squares methods
	of the \texttt{SciPy} library \cite{2020SciPy-NMeth},
	which gave two conclusions.
	\begin{enumerate}
		\item For many of the non-solvable groups considered (e.g. $N=A_5$)
		the (numerical) projection of their $v_N$
		onto the column space of $V$
		was at distance about $1$ from $v_N$,
		meaning that $v_N$ was almost certainly not in the span.
		\item For some of the groups (e.g. the $\GL(3,2)$) the algorithm
		converged to a solution up to a small error of $10^{-6}$.
		This lead us to search for an exact solution to $Vx=v_N$
		for these groups.
	\end{enumerate}

\textbf{Step 6.} For $N=\GL(3, 2)$ we solved $Vx=v_N$ exactly
	using the symbolic computation library
	\texttt{SymPy} \cite{10.7717/peerj-cs.103}.
	A family of solutions was found; we chose one of them.

\textbf{Step 7.} We converted the solution back into the language of groups
	and checked that it holds using more direct methods.

\begin{landscape}
	\begin{table}
	\centering
	\small{
	\begin{tabular}{llll|l|llllllllllllllll}
		$i$ & $G_i$ & Id & $m_i$ &  $E_{G_i}$ &  1 &  2 &  3 &   4 &   6 &  7 &   8 &  12 &  14 &  21 &  24 &  28 &  42 &  56 &  84 &  168 \\
		\hline
		1 & $C_4$ &       (4, 1) &             9 &    4 &  1 &  2 &  1 &   4 &   2 &  1 &   4 &   4 &   2 &   1 &   4 &   4 &   2 &   4 &   4 &    4 \\
		2 & $D_3$ &       (6, 1) &             6 &    6 &  1 &  4 &  3 &   4 &   6 &  1 &   4 &   6 &   4 &   3 &   6 &   4 &   6 &   4 &   6 &    6 \\
		3 & $C_7$ &       (7, 1) &             1 &    7 &  1 &  1 &  1 &   1 &   1 &  7 &   1 &   1 &   7 &   7 &   1 &   7 &   7 &   7 &   7 &    7 \\
		4 & $D_4$ &       (8, 3) &             9 &    4 &  1 &  6 &  1 &   8 &   6 &  1 &   8 &   8 &   6 &   1 &   8 &   8 &   6 &   8 &   8 &    8 \\
		5 & $D_7$ &      (14, 1) &            18 &   14 &  1 &  8 &  1 &   8 &   8 &  7 &   8 &   8 &  14 &   7 &   8 &  14 &  14 &  14 &  14 &   14 \\
		6 & $\SL(2, 3)$ &      (24, 3) &            21 &   12 &  1 &  2 &  9 &   8 &  18 &  1 &   8 &  24 &   2 &   9 &  24 &   8 &  18 &   8 &  24 &   24 \\
		7 & $C_{24}\rtimes C_2$ &      (48, 6) &             3 &   24 &  1 & 14 &  3 &  28 &  18 &  1 &  32 &  36 &  14 &   3 &  48 &  28 &  18 &  32 &  36 &   48 \\
		8 & $C_7\rtimes D_4$ &      (56, 7) &             3 &   28 &  1 & 18 &  1 &  32 &  18 &  7 &  32 &  32 &  42 &   7 &  32 &  56 &  42 &  56 &  56 &   56 \\
		9 & $C_7\rtimes C_{12}$ &      (84, 1) &             6 &   84 &  1 &  2 & 15 &  16 &  30 &  7 &  16 &  72 &  14 &  21 &  72 &  28 &  42 &  28 &  84 &   84 \\
		10 & $\Dic_{21}$ &      (84, 5) &             6 &   84 &  1 &  2 &  3 &  44 &   6 &  7 &  44 &  48 &  14 &  21 &  48 &  56 &  42 &  56 &  84 &   84 \\
		11 & $C_7 \rtimes A_4$ &     (84, 11) &            21 &   42 &  1 &  4 & 57 &   4 &  60 &  7 &   4 &  60 &  28 &  63 &  60 &  28 &  84 &  28 &  84 &   84 \\
		12 & $C_7 \rtimes D_7$ &      (98, 4) &             2 &   14 &  1 & 50 &  1 &  50 &  50 & 49 &  50 &  50 &  98 &  49 &  50 &  98 &  98 &  98 &  98 &   98 \\
		13 & $C_4 \rtimes F_7$ &     (168, 9) &            21 &   84 &  1 & 30 & 15 &  32 & 114 &  7 &  32 & 144 &  42 &  21 & 144 &  56 & 126 &  56 & 168 &  168 \\
		14 & $C_{21}\rtimes D_4$ &    (168, 15) &             9 &   84 &  1 & 22 &  3 &  64 &  54 &  7 &  64 &  96 &  70 &  21 &  96 & 112 & 126 & 112 & 168 &  168 \\
		15 & $C_7\rtimes D_{12}$ &    (168, 17) &             6 &   84 &  1 & 50 &  3 &  64 &  54 &  7 &  64 &  96 &  98 &  21 &  96 & 112 & 126 & 112 & 168 &  168 \\
		16 & $F_8\rtimes C_3$ &    (168, 43) &             3 &   42 &  1 &  8 & 57 &   8 & 120 & 49 &   8 & 120 &  56 & 105 & 120 &  56 & 168 &  56 & 168 &  168 \\
		17 & $D_8\rtimes D_7$ &   (224, 106) &             3 &   56 &  1 & 52 &  1 &  96 &  52 &  7 & 128 &  96 & 112 &   7 & 128 & 168 & 112 & 224 & 168 &  224 \\
		18 & $C_7\rtimes D_{24}$ &    (336, 31) &             3 &  168 &  1 & 98 &  3 & 100 & 102 &  7 & 128 & 108 & 182 &  21 & 192 & 196 & 210 & 224 & 252 &  336 \\
	\end{tabular}
	\caption{The groups $G_i$, their multiplicities $m_i$,
		their exponents $E_{G_i}$,
		and their exponent types.}
	\label{tab:expontent_Gis}
	}
	\end{table}
	\begin{table}
		\vspace{3mm}
		\centering
		\tiny{
		\begin{subtable}{0.2\linewidth}
			\centering
			\begin{tabular}{l|l}
				1 & 1 \\
				2 & $2^{221}\cdot 3^{36}\cdot 5^{37}\cdot 7^9\cdot 11^9\cdot 13^3$ \\
				3 & $3^{126}\cdot 5^{27}\cdot 19^{24}$ \\
				4 & $2^{500}\cdot 3^{3}\cdot 5^{10}\cdot 7^3\cdot 11^6$ \\
			\end{tabular}
		\end{subtable}
		\hfill
		\begin{subtable}{0.2\linewidth}
			\centering
			\begin{tabular}{l|l}
				6 & $2^{215}\cdot 3^{174}\cdot 5^{34}\cdot 13^3\cdot 17^3\cdot 19^{21}$ \\
				7 & $7^{107}$ \\
				8 & $2^{530}\cdot 5^{4}\cdot 11^6$ \\
				12 & $2^{464}\cdot 3^{144}\cdot 5^{28}$ \\
			\end{tabular}
		\end{subtable}
		\hfill
		\begin{subtable}{0.2\linewidth}
			\centering
			\begin{tabular}{l|l}
				14 & $2^{191}\cdot 3^{33}\cdot 5^{9}\cdot 7^{113}\cdot 13^3$ \\
				21 & $3^{147}\cdot 5^3\cdot 7^{104}$ \\
				24 & $2^{488}\cdot 3^{132}\cdot 5^{28}$ \\
				28 & $2^{374}\cdot 3^{3}\cdot 7^{110}$ \\
			\end{tabular}
		\end{subtable}
		\hfill
		\begin{subtable}{0.2\linewidth}
			\centering
			\begin{tabular}{l|l}
				42 & $2^{185}\cdot 3^{177}\cdot 5^{3}\cdot 7^{104}$ \\
				56 & $2^{398}\cdot 7^{104}$ \\
				84 & $2^{347}\cdot 3^{114}\cdot 7^{104}$ \\
				168 & $2^{365}\cdot 3^{105}\cdot 7^{104}$ \\
			\end{tabular}
		\end{subtable}
		\caption{The exponent type of $G$.}
		\label{tab:Gexps}
		}
	\end{table}

	\begin{table}
		\centering
		\small{
		\begin{tabular}{llll|l|llllllllllllllll}
			$i$ & $H_i$ & Id &  $n_i$ &  $E_{H_i}$ &  1 &  2 &  3 &   4 &   6 &  7 &   8 &  12 &  14 &  21 &  24 &  28 &  42 &  56 &  84 &  168 \\
			\hline
			1 & $C_2$   &       (2, 1) &            21 &    2 &  1 &  2 &  1 &   2 &   2 &  1 &   2 &   2 &   2 &   1 &   2 &   2 &   2 &   2 &   2 &    2 \\
			2 & $C_3$   &       (3, 1) &             3 &    3 &  1 &  1 &  3 &   1 &   3 &  1 &   1 &   3 &   1 &   3 &   3 &   1 &   3 &   1 &   3 &    3 \\
			3 & $\Dic_3$    &      (12, 1) &             6 &   12 &  1 &  2 &  3 &   8 &   6 &  1 &   8 &  12 &   2 &   3 &  12 &   8 &   6 &   8 &  12 &   12 \\
			4 & $A_4$   &      (12, 3) &            21 &    6 &  1 &  4 &  9 &   4 &  12 &  1 &   4 &  12 &   4 &   9 &  12 &   4 &  12 &   4 &  12 &   12 \\
			5 & $\SD_{16}$  &      (16, 8) &             3 &    8 &  1 &  6 &  1 &  12 &   6 &  1 &  16 &  12 &   6 &   1 &  16 &  12 &   6 &  16 &  12 &   16 \\
			6 & $C_7\rtimes C_3$  &      (21, 1) &             4 &   21 &  1 &  1 & 15 &   1 &  15 &  7 &   1 &  15 &   7 &  21 &  15 &   7 &  21 &   7 &  21 &   21 \\
			7 & $D_{12}$   &      (24, 6) &             6 &   12 &  1 & 14 &  3 &  16 &  18 &  1 &  16 &  24 &  14 &   3 &  24 &  16 &  18 &  16 &  24 &   24 \\
			8 & $C_3\rtimes D_4$   &      (24, 8) &             6 &   12 &  1 & 10 &  3 &  16 &  18 &  1 &  16 &  24 &  10 &   3 &  24 &  16 &  18 &  16 &  24 &   24 \\
			9 & $\Dic_7$   &      (28, 1) &            15 &   28 &  1 &  2 &  1 &  16 &   2 &  7 &  16 &  16 &  14 &   7 &  16 &  28 &  14 &  28 &  28 &   28 \\
			10 & $F_7$   &      (42, 1) &            18 &   42 &  1 &  8 & 15 &   8 &  36 &  7 &   8 &  36 &  14 &  21 &  36 &  14 &  42 &  14 &  42 &   42 \\
			11 & $D_{21}$    &      (42, 5) &             6 &   42 &  1 & 22 &  3 &  22 &  24 &  7 &  22 &  24 &  28 &  21 &  24 &  28 &  42 &  28 &  42 &   42 \\
			12 & $D_{24}$    &      (48, 7) &             3 &   24 &  1 & 26 &  3 &  28 &  30 &  1 &  32 &  36 &  26 &   3 &  48 &  28 &  30 &  32 &  36 &   48 \\
			13 & $D_{28}$    &      (56, 5) &            27 &   28 &  1 & 30 &  1 &  32 &  30 &  7 &  32 &  32 &  42 &   7 &  32 &  56 &  42 &  56 &  56 &   56 \\
			14 & $\Dic_7\rtimes C_6$   &    (168, 11) &             3 &   84 &  1 & 18 & 15 &  32 & 102 &  7 &  32 & 144 &  42 &  21 & 144 &  56 & 126 &  56 & 168 &  168 \\
			15 & $C_{14}.A_4$   &    (168, 23) &            21 &   84 &  1 &  2 & 57 &   8 & 114 &  7 &   8 & 120 &  14 &  63 & 120 &  56 & 126 &  56 & 168 &  168 \\
			16 & $\GL(3,2)$   &    (168, 42) &             3 &   84 &  1 & 22 & 57 &  64 &  78 & 49 &  64 & 120 &  70 & 105 & 120 & 112 & 126 & 112 & 168 &  168 \\
			17 & $C_7\rtimes F_7$   &    (294, 10) &             2 &   42 &  1 & 50 & 15 &  50 & 162 & 49 &  50 & 162 &  98 & 147 & 162 &  98 & 294 &  98 & 294 &  294 \\
			18 & $D_{12}.D_7$   &    (336, 36) &             3 &  168 &  1 & 14 &  3 & 100 &  18 &  7 & 128 & 108 &  98 &  21 & 192 & 196 & 126 & 224 & 252 &  336 \\
		\end{tabular}
		\caption{The groups $H_i$, their multiplicities $n_i$,
			their exponents $E_{H_i}$,
			and their exponent types.}
		\label{tab:expontent_His}
		}
	\end{table}
	\begin{table}
		\vspace{3mm}
		\centering
		\tiny{
			\begin{subtable}{0.2\linewidth}
				\centering
				\begin{tabular}{l|l}
					1 & 1 \\
					2 & $2^{221}\cdot 3^{36}\cdot 5^{37}\cdot 7^{9}\cdot 11^{9}\cdot 13^{3}$ \\
					3 & $3^{126}\cdot 5^{27}\cdot 19^{24}$ \\
					4 & $2^{500}\cdot 3^{3}\cdot 5^{10}\cdot 7^{3}\cdot 11^{6}$ \\
				\end{tabular}
			\end{subtable}
			\hfill
			\begin{subtable}{0.2\linewidth}
				\centering
				\begin{tabular}{l|l}
					6 & $2^{215}\cdot 3^{174}\cdot 5^{34}\cdot 13^{3}\cdot 17^{3}\cdot 19^{21}$ \\
					7 & $7^{107}$ \\
					8 & $2^{530}\cdot 5^{4}\cdot 11^{6}$ \\
					12 & $2^{464}\cdot 3^{144}\cdot 5^{28}$ \\
				\end{tabular}
			\end{subtable}
			\hfill
			\begin{subtable}{0.2\linewidth}
				\centering
				\begin{tabular}{l|l}
					14 & $2^{191}\cdot 3^{33}\cdot 5^{9}\cdot 7^{113}\cdot 13^{3}$ \\
					21 & $3^{147}\cdot 5^{3}\cdot 7^{104}$ \\
					24 & $2^{488}\cdot 3^{132}\cdot 5^{28}$ \\
					28 & $2^{374}\cdot 3^{3}\cdot 7^{110}$ \\
				\end{tabular}
			\end{subtable}
			\hfill
			\begin{subtable}{0.2\linewidth}
				\centering
				\begin{tabular}{l|l}
					42 & $2^{185}\cdot 3^{177}\cdot 5^{3}\cdot 7^{104}$ \\
					56 & $2^{398}\cdot 7^{104}$ \\
					84 & $2^{347}\cdot 3^{114}\cdot 7^{104}$ \\
					168 & $2^{365}\cdot 3^{105}\cdot 7^{104}$ \\
				\end{tabular}
			\end{subtable}
			\caption{The exponent type of $H$,
				equal to the exponent type of $G$.}
			\label{tab:Hexps}
		}
	\end{table}
\end{landscape}


\bibliographystyle{plain}
\bibliography{Bibliography}
	
\end{document}